\title{Domains of Attraction for Positive and Discrete Tempered Stable Distributions}
\author{Michael Grabchak\footnote{Email address: mgrabcha@uncc.edu}\\
{\it University of North Carolina Charlotte}}
\begin{document}
\newtheorem{thrm}{Theorem}
\newtheorem{prop}[thrm]{Proposition}
\newtheorem{defn}[thrm]{Definition}
\newtheorem{cor}[thrm]{Corollary}
\newtheorem{lemma}[thrm]{Lemma}
\newtheorem{remark}[thrm]{Remark}

\newcommand{\rd}{\mathrm d}
\newcommand{\rE}{\mathrm E}
\newcommand{\ts}{TS^p_\alpha}
\newcommand{\ets}{ETS^p_\alpha}
\newcommand{\tr}{\mathrm{tr}}
\newcommand{\iid}{\stackrel{\mathrm{iid}}{\sim}}
\newcommand{\eqd}{\stackrel{d}{=}}
\newcommand{\cond}{\stackrel{d}{\rightarrow}}
\newcommand{\conv}{\stackrel{v}{\rightarrow}}
\newcommand{\conw}{\stackrel{w}{\rightarrow}}
\newcommand{\conp}{\stackrel{p}{\rightarrow}}
\newcommand{\confdd}{\stackrel{fdd}{\rightarrow}}
\newcommand{\plim}{\mathop{\mathrm{p\mbox{-}lim}}}
\newcommand{\lgg}{\mathrm{log}}
\newcommand{\dlim}{\operatorname*{d-lim}}

\maketitle
\begin{abstract}
We introduce a large and flexible class of discrete tempered stable distributions, and analyze the domains of attraction for both this class and the related class of positive tempered stable distributions. Our results suggest that these are natural models for sums of independent and identically distributed random variables with tempered heavy tails tails, i.e.\ tails that appear to be heavy up to a point, but ultimately decay faster.
\end{abstract}

\section{Introduction}

Stable distributions play a central role in many applications. However, their use is limited by the fact that they have an infinite variance, which is not realistic for most real-world applications. This has led to the development of tempered stable distributions, which is a class of models obtained by modifying the tails of stable distributions to make them lighter, while leaving their central portions, essentially, unchanged. Perhaps the earliest models of this type are Tweedie distributions, which were introduced in the seminal paper Tweedie (1984) \cite{Tweedie:1984}.
A more general approach, allowing for a wide variety of tail behavior, is given in Rosi\'nski (2007) \cite{Rosinski:2007}. That approach was further generalized in several directions in \cite{Rosinski:Sinclair:2010}, \cite{Bianchi:Rachev:Kim:Fabozzi:2011}, and \cite{Grabchak:2012}. A survey, along with a historical overview and many references can be found in \cite{Grabchak:2016}. We will focus on the class of positive tempered stable (PTS) distributions. This class is important for many applications including actuarial science \cite{Griffin:Maller:Roberts:2013}, biostatistics \cite{Palmer:Ridout:Morgan:2008}, mathematical finance \cite{Valdivieso:Schoutens:Tuerlinckx:2009}, and computer science  \cite{Cao:Grabchak:2014}. 

In a different direction, stable distributions have been modified to deal with over-dispersion when modeling count data. Specifically, the class of discrete stable distributions was introduced in Steutel and van Harn (1979) \cite{Steutel:vanHarn:1979}, see also \cite{Christoph:Schreiber:1998}, \cite{Steutel:vanHarn:2004}, and \cite{Klebanov:Slamova:2013}. As with continuous stable distributions, these models have an infinite variance, which has led to the development of a tempered modification. In particular, \cite{Hougaard:1987} introduced a class of models that has come to be known as Poisson-Tweedie. The name comes from the fact that these can be represented as a Poisson process subordinated by a Tweedie distribution. Many results along with applications to a variety of areas including economics, biostatistics, bibliometrics, and ecology can be found in, e.g.\  \cite{Hougaard:Lee:Whitmore:1997}, \cite{Zhu:Joe:2009}, \cite{El-Shaarawi:Zhu:Joe:2010}, \cite{Barndorff-Nielsen:Pollard:Shephard:2012}, \cite{Jorgensen:Kokonendji:2016}, \cite{Baccini:Barabesi:Stracqualursi:2016}, and the references therein. 

In this paper, we introduce a large class of discrete tempered stable (DTS) distributions, which generalize the class of Poisson-Tweedie models. We then prove limit theorems for PTS and DTS distributions. Just as generalizations of the central limit theorem explain how stable and discrete stable distributions approximate sums of independent and identically distributed (iid) random variables with heavy tails, our theorems aim to provide a theoretical justification for the use of PTS and DTS models in approximating sums of iid random variables with tempered heavy tails, i.e.\ tails that appear to be heavy up to point, but have been modified to, ultimately, decay faster. For a discussion of how such models occur in practice see \cite{Grabchak:Samorodnitsky:2010} and \cite{Cao:Grabchak:2014}. Related limit theorems for Poisson-Tweedie distributions are given in \cite{Jorgensen:Kokonendji:2016}. In the continuous case, similar results for Tweedie distributions were studied in \cite{Grabchak:Molchanov:2015} and, from a different perspective, convergence of certain random walks to tempered stable distributions were studied in \cite{Chakrabarty:Meerschaert:2011}.

Before proceeding we introduce some notation. We write $\mathbb N=\{1,2,\dots\}$, $\mathbb Z_+=\mathbb N\cup\{0\}$,  $\mathbb R_+=[0,\infty)$, and $\mathfrak B(\mathbb R_+)$ to denote the Borel sets on $\mathbb R_+$. For a probability measure $\mu$ with support contained in $\mathbb R_+$ we write $\hat\mu(z)=\int_{\mathbb R_+} e^{-zx}\mu(\rd x)$ to denote its Laplace transform and $X\sim\mu$ to denote that $X$ is a random variable with distribution $\mu$. For a function $f:\mathbb R_+\mapsto\mathbb R_+$ and $\beta\in\mathbb R$, we write $f\in RV_\beta$ to denote that $f$ is regularly varying with index $\beta$, i.e.\ that
$$
\lim_{t\to\infty} \frac{f(xt)}{f(t)}=x^\beta \ \ \mbox{for any } x>0.
$$
We write $1_A$ to denote the indicator function on set $A$, for $x>0$ we write $\Gamma(x) = \int_0^\infty e^{-t} t^{x-1}\rd t$ to denote the gamma function, and we write $\conp$, $\cond$, $\conw$, and $\eqd$ to denote, respectively, convergence in probability, convergence in distribution, weak convergence, and equality in distribution. For $c\in[-\infty,\infty]$, we write $f(t)\sim g(t)$ as $t\to c$ to denote that $\lim_{t\to c} \frac{f(x)}{g(x)}=1$.

\section{Positive Stable and Positive Tempered Stable Distributions}

In this section we formally introduce positive stable and positive tempered stable distributions. We begin by recalling some basic facts about positive infinitely divisible distributions. An infinitely divisible distribution $\mu$, with support contained in $\mathbb R_+$, has a Laplace transform of the form
\begin{eqnarray}\label{eq: Laplace ID}
\hat\mu(z) = 
\exp\left\{-bz - \int_{(0,\infty)}\left(1-e^{-z x}\right)M(\rd x)\right\}, \ \ z\ge0,
\end{eqnarray}
where $b\ge0$ and $M$ is a Borel measure on $(0,\infty)$ satisfying
\begin{eqnarray}\label{eq: finite var}
\int_{(0,\infty)} \left(x\wedge1\right)M(\rd x)<\infty.
\end{eqnarray}
Here, $b$ is called the drift and $M$ is called the L\'evy measure. These parameters uniquely determine the distribution and we write $\mu=ID_+(M,b)$. For a general reference on infinitely divisible distributions see \cite{Sato:1999}.

A probability measure $\mu$ on $\mathbb R_+$ is said to be strictly $\alpha$-stable if, for any $n\in\mathbb N$ and $X_1,X_2,\dots,X_n\iid\mu$, we have 
\begin{eqnarray}\label{eq: defn pos stable}
X_1\eqd n^{-1/\alpha}\left(X_1+X_2+\cdots+X_n\right).
\end{eqnarray}
By positivity, we necessarily have $\alpha\in(0,1]$. If $\alpha\in(0,1)$, then $\mu=ID_+(M_\alpha,0)$, where
$$
M_\alpha(\rd x) = \eta x^{-1-\alpha}1_{x>0}\rd x
$$
for some $\eta\ge0$. If $\alpha=1$ then $\mu=ID_+(0,\eta)$ for some $\eta\ge0$, and thus $\mu$ is a point mass at $\eta$. For $\alpha\in(0,1)$ the Laplace transform is of the form
$$
\hat\mu(z)=e^{-\eta \frac{\Gamma(1-\alpha)}{\alpha}z^\alpha}, \ \ z\ge0.
$$
We denote this distribution by $PS_\alpha(\eta)$. Note that $PS_\alpha(0)$ is a point mass at zero for all $\alpha\in(0,1)$. For more about stable distributions on $\mathbb R_+$ see \cite{Steutel:vanHarn:2004}.

It is well-known that, for $\alpha\in(0,1)$, stable distributions have an infinite mean, which is not realistic for many applications. This has lead to the development of distributions that look stable-like in some large central region, but with lighter tails. Following \cite{Rosinski:Sinclair:2010}, we define positive tempered stable distributions as follows.

\begin{defn}\label{defn: PTS}
A distribution $\mu=ID_+(M,b)$ is called a positive tempered stable (PTS) distribution if $b\ge0$ and
$$
M(\rd x) = \eta q(x)x^{-1-\alpha}1_{x>0}\rd x,
$$
where $\alpha\in(0,1)$, $\eta\ge0$, and $q:\mathbb R_+\mapsto \mathbb R_+$ is a bounded, non-negative, Borel function with $\lim_{x\downarrow0}q(x) = 1$, and satisfying
\begin{eqnarray}\label{eq: integ on q}
\int_0^\infty \left(1\wedge x\right)q(x)x^{-1-\alpha}\rd x <\infty.
\end{eqnarray}
We call $q$ the tempering function and we write $\mu=PTS_\alpha(q,\eta,b)$. When $b=0$, we write $PTS_\alpha(q,\eta)=PTS_\alpha(q,\eta,0)$.
\end{defn}

We are motivated by the case where the tempering function, $q$, satisfies the additional condition that $\lim_{x\to\infty} q(x) = 0$. In this case, $PTS_\alpha(q,\eta)$ is similar to $PS_\alpha(\eta)$ in some central region, but with lighter tails. In this sense, $q$ ``tempers'' the tails of the stable distribution. Despite this motivation, none of the results of the paper require this additional condition. We now give several examples of  tempering functions, others can be found in, e.g.\ \cite{Terdik:Woyczynski:2006} and \cite{Grabchak:2016}.\\

\noindent\textbf{Examples.} \textbf{1.} When $q\equiv 1$ there is no tempering and  $PTS_\alpha(q,\eta)=PS_\alpha(\eta)$. 
\textbf{2.} When $q(x) = e^{-ax}$ for some $a>0$, we get the class of Tweedie distributions, which were introduced in \cite{Tweedie:1984}. When $\alpha=.5$, these correspond to inverse Gaussian distributions, see e.g.\ \cite{Seshadri:1993}. 
\textbf{3.} When $q(x) =1_{[0\le x<a]}$ for some $a>0$, we call this truncation. Such distributions are important for certain limit theorems, see \cite{Chakrabarty:Samorodnitsky:2012}.

\section{Discrete Stable and Discrete Tempered Stable Distributions}

A discrete analogue of stable distributions was introduced in \cite{Steutel:vanHarn:1979}.  Here \eqref{eq: defn pos stable} is modified to ensure that the right side remains an integer. Specifically, \cite{Steutel:vanHarn:1979} introduced the so-called `thinning' operation $\circ$, which is defined as follows. If $\gamma\in[0,1]$ and $X$ is a random variable with support contained in $\mathbb Z_+$, then $\gamma\circ X$ is a random variable with distribution
$$
\gamma\circ X \eqd \sum_{i=1}^X \epsilon_i,
$$
where $\epsilon_1,\epsilon_2,\dots$ are iid random variables independent of $X$ having a Bernoulli distribution with $P(\epsilon_i=1) = 1-P(\epsilon_i=0)=\gamma$. Here and throughout, we set $\sum_{i=1}^0 \epsilon_i =0$. Note that, if $P(s)$ is the probability generating function (pgf) of $X$, i.e.\ $P(s)=\rE[s^X]$, then the pgf of $\gamma\circ X$ is $P(1-\gamma+\gamma s)$.

For $\alpha\in(0,1]$, a distribution $\mu$ on $\mathbb Z_+$ is called discrete $\alpha$-stable if for any $n\in\mathbb N$ we have
$$
X_1 \eqd n^{-1/\alpha} \circ \left(X_1+X_2+\cdots+X_n\right),
$$
where $X_1,X_2,\dots\iid \mu$. The class of discrete $1$-stable distributions coincides with the class of Poisson distributions. For $\alpha\in(0,1)$ the pgf of a discrete stable distribution is of the form
$$
\int_{\mathbb Z_+}s^x\mu(\rd x)= e^{-\eta\frac{\Gamma(1-\alpha)}{\alpha}(1-s)^\alpha}, \ \ |s|\le1,
$$
where $\eta\ge0$ is a parameter. We denote this distribution by $DS_\alpha(\eta)$. A useful representation of discrete stable distributions is given in Theorem 6.7 on page 371 of \cite{Steutel:vanHarn:2004}. It is as follows.

\begin{prop}
Fix $\alpha\in(0,1)$ and $\eta\ge0$. If $\{N_t:t\ge0\}$ is a Poisson process with rate $1$ and $T\sim PS_\alpha(\eta)$ is independent of this process, then $N_{T}\sim DS_\alpha(\eta)$.
\end{prop}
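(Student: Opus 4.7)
The plan is to compute the probability generating function (pgf) of $N_T$ directly by conditioning on $T$, and then to verify that the resulting expression matches the pgf of $DS_\alpha(\eta)$ stated in the excerpt. Since a distribution on $\mathbb Z_+$ is uniquely determined by its pgf, this will suffice.

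First I would fix $s\in[-1,1]$ and use the fact that, conditional on $T=t$, the random variable $N_T$ is Poisson with mean $t$. The well-known pgf of a Poisson random variable gives
$$
\rE\bigl[s^{N_T}\mid T=t\bigr]=e^{-t(1-s)}.
$$
By the tower property and Fubini's theorem (applicable since $|s^{N_T}|\le 1$),
$$
\rE\bigl[s^{N_T}\bigr]=\rE\bigl[e^{-T(1-s)}\bigr].
$$
For $s\in[-1,1]$ we have $1-s\in[0,2]$, so the right-hand side is exactly the Laplace transform of $T$ evaluated at $1-s$. Since $T\sim PS_\alpha(\eta)$, the Laplace transform formula recalled earlier in the excerpt yields
$$
\rE\bigl[s^{N_T}\bigr]=\exp\!\left\{-\eta\,\frac{\Gamma(1-\alpha)}{\alpha}(1-s)^\alpha\right\},
$$
which is exactly the pgf of $DS_\alpha(\eta)$.

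To upgrade from real $s\in[-1,1]$ to the full disk $|s|\le 1$ as stated, I would note that both sides are analytic in $s$ on $|s|<1$ (the left by the general theory of pgfs, the right by inspection), continuous up to the boundary, and agree on the real interval $(-1,1)$; by the identity theorem they agree on the whole closed disk. The uniqueness of a distribution on $\mathbb Z_+$ given its pgf then yields $N_T\sim DS_\alpha(\eta)$.

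I do not expect any genuine obstacle: the argument is essentially a subordination calculation combined with the explicit Laplace transform of $PS_\alpha(\eta)$. The only point requiring a moment's care is the interchange of expectation and the conditional Poisson pgf, but boundedness of $s^{N_T}$ for $|s|\le 1$ makes Fubini immediate. Everything else is a direct substitution.
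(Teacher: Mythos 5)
Your proof is correct and is exactly the standard subordination computation: the paper does not prove this proposition itself (it cites Theorem 6.7 of Steutel and van Harn (2004)), but the identical conditioning argument $\rE[s^{N_T}]=\rE[e^{-(1-s)T}]$ is the one the paper uses immediately afterwards to derive the pgf of $DTS_\alpha(q,\eta)$ in \eqref{eq: pgf}, so you are on the same track. The final analytic-continuation step to the full disk $|s|\le 1$ is harmless but unnecessary, since agreement of pgfs on $[0,1]$ (indeed on any set with an accumulation point in $[0,1]$) already determines a distribution on $\mathbb Z_+$ uniquely.
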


By analogy, we define discrete tempered stable distributions as follows. 

\begin{defn}\label{defn: DTS}
Fix $\alpha\in(0,1)$ and $\eta\ge0$. Let $T\sim PTS_\alpha(q,\eta)$ and let $\{N_t:t\ge0\}$ be a Poisson process with rate $1$ independent of $T$. The distribution of $N_{T}$ is called a discrete tempered stable (DTS) distribution. We denote this distribution by $DTS_\alpha(q,\eta)$.
\end{defn}

By a simple conditioning argument, the pgf of $DTS_\alpha(q,\eta)$ is, for $s\in(0,1]$
\begin{eqnarray}\label{eq: pgf}
\rE[s^{N_T}] =\rE[e^{-(1-s)T}]=\exp\left\{- \eta\int_{(0,\infty)}\left(1-e^{-(1-s)x}\right)q(x)x^{-1-\alpha}\rd x\right\}.
\end{eqnarray}

\begin{remark}
There are two simple ways to generalize Definition \ref{defn: DTS}. The first is to allow the rate of the Poisson process to be $r>0$ not necessarily $1$. However, in this case, the distribution of $N_T$ is $DTS_\alpha(q_r,r^{\alpha}\eta)$, where $q_r(x) = q(x/r)$. The second is to allow $T\sim PTS_\alpha(q,\eta,b)$ with $b>0$. In this case, the distribution of $N_T$ is the convolution of $DTS_\alpha(q,\eta)$ and a Poisson distribution with mean $b$.
\end{remark}

We can consider the same tempering functions as for PTS distributions. This leads to the following examples.\\

\noindent \textbf{Examples.} \textbf{1.} When $q\equiv 1$ we have  $DTS_\alpha(q,\eta)=DS_\alpha(\eta)$. 
\textbf{2.} When $q(x) = e^{-ax}$ for $a>0$ the corresponding distributions are Poisson-Tweedie. When $\alpha=.5$ these correspond to Poisson Inverse Gaussian distributions, which were introduced in \cite{Holla:1967}. 
\textbf{3.} When $q(x) = 1_{0\le x<a}$  for $a>0$, we are in the case of truncation.\\

We conclude this section by showing that we can approximate PTS distributions by DTS distributions.  The idea is motivated by \cite{Klebanov:Slamova:2013}, which gives similar results for certain generalizations of discrete stable distributions. Let $q$ be a tempering function. For any $a>0$ define $X_a \sim DTS_\alpha(q_{1/a},a^{-\alpha}\eta)$, where $q_{1/a}(x) = q(ax)$. Since $X_a$ is defined on $\mathbb Z_+$, $aX_a$ is defined on $a\mathbb Z_+=\{0,a,2a,\dots\}$. 

\begin{prop}
We have
$$
aX_a \cond PTS_\alpha(q,\eta) \ \ \mbox{as } a\downarrow0.
$$
\end{prop}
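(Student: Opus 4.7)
The natural approach is to show convergence of Laplace transforms and invoke the continuity theorem for nonnegative random variables. Since $aX_a$ takes values in $a\mathbb Z_+\subset\mathbb R_+$, weak convergence is equivalent to pointwise convergence of $\rE[e^{-z\,aX_a}]$ to the Laplace transform of $PTS_\alpha(q,\eta)$ for every $z\ge0$.

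The first step is to rewrite the Laplace transform of $aX_a$ using the pgf formula \eqref{eq: pgf}. For $z\ge 0$, setting $s=e^{-az}$ gives
\begin{eqnarray*}
\rE[e^{-z\,aX_a}] = \rE[s^{X_a}] = \exp\left\{-a^{-\alpha}\eta\int_{(0,\infty)}\left(1-e^{-(1-e^{-az})x}\right)q(ax)x^{-1-\alpha}\rd x\right\}.
\end{eqnarray*}
A change of variable $u=ax$ then turns the exponent into
\begin{eqnarray*}
-\eta\int_{(0,\infty)}\left(1-e^{-\frac{1-e^{-az}}{a}u}\right)q(u)u^{-1-\alpha}\rd u,
\end{eqnarray*}
so the scaling nicely cancels the factor $a^{-\alpha}$ and leaves an integral against the same measure $q(u)u^{-1-\alpha}\rd u$ as the target PTS distribution.

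Next I would pass to the limit inside the integral. Since $(1-e^{-az})/a\uparrow z$ as $a\downarrow 0$, the integrand converges pointwise to $1-e^{-zu}$, which would yield exactly the Laplace transform of $PTS_\alpha(q,\eta)$ (equivalently, the exponent in \eqref{eq: Laplace ID} with drift $0$ and L\'evy measure $\eta q(u)u^{-1-\alpha}1_{u>0}\rd u$). The justification will be dominated convergence: using $1-e^{-t}\le t\wedge 1$ for $t\ge 0$ and $(1-e^{-az})/a\le z$, one gets
\begin{eqnarray*}
1-e^{-\frac{1-e^{-az}}{a}u}\le (zu)\wedge 1.
\end{eqnarray*}
A simple case split shows $(zu)\wedge 1\le \max(z,1)(u\wedge 1)$, so the integrand is dominated by a multiple of $(u\wedge 1)q(u)u^{-1-\alpha}$, which is integrable by \eqref{eq: integ on q}.

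The only real obstacle is verifying the dominating function is integrable uniformly in $a$, and the bound above handles it cleanly. Once the pointwise convergence of the Laplace transforms is established, the continuity theorem for Laplace transforms on $\mathbb R_+$ delivers $aX_a\cond PTS_\alpha(q,\eta)$.
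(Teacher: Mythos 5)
Your proposal is correct and follows essentially the same route as the paper's proof: express the Laplace transform of $aX_a$ via the pgf \eqref{eq: pgf}, substitute to absorb the factor $a^{-\alpha}$ into the L\'evy measure, and pass to the limit by dominated convergence using the bound $1-e^{-tu}\le 1\wedge(tu)$ together with the integrability condition \eqref{eq: integ on q}. No gaps.
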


\begin{proof}
From \eqref{eq: pgf} it follows that the Laplace transform of $aX_a$ is given, for $z\ge0$,  by
\begin{eqnarray*}
\rE[e^{-z aX_a}] 
 &=& \exp\left\{-a^{-\alpha} \eta\int_{(0,\infty)}\left(1-e^{-(1-e^{-za})x}\right)q(ax)x^{-1-\alpha}\rd x\right\}\\
&=&\exp\left\{-\eta\int_{(0,\infty)}\left(1-e^{-\frac{(1-e^{-za})}{a}x}\right)q(x)x^{-1-\alpha}\rd x\right\}\\
&\to& \exp\left\{-\eta\int_{(0,\infty)}\left(1-e^{-zx}\right)q(x)x^{-1-\alpha}\rd x\right\},
\end{eqnarray*}
as $a\downarrow0$. Here the convergence follows by the facts that $\frac{(1-e^{-za})}{a}\to z$, $\left(1-e^{-\frac{(1-e^{-za})}{a}x}\right)\le 1\wedge\frac{(1-e^{-za})x}{a}\le 1\wedge (zx)\le(z+1)(1\wedge x)$, and dominated convergence.
\end{proof}

\section{Main Results}

Let $\mu$ be a probability measure on $\mathbb R_+$ such that, for $t>0$,
\begin{eqnarray}\label{eq: mu}
\mu(\{x:x>t\}) = t^{-\alpha} L(t)
\end{eqnarray}
for some $\alpha\in(0,1)$ and $L\in RV_0$.  Let
\begin{eqnarray}\label{eq: at}
V(t) = t^\alpha/L(t) \mbox{ and } a_t = 1/V^\leftarrow(t),
\end{eqnarray}
where $V^\leftarrow(t) = \inf\{s: V(s)>t\}$ is the generalized inverse of $V$, satisfying   
$$
V(V^\leftarrow(t)) \sim V^\leftarrow(V(t)) \sim t \mbox{ as }t\to\infty,
$$
see \cite{Bingham:Goldie:Teugels:1987}. Note that $a_t \in RV_{-1/\alpha}$ and thus that $a_n\to0$ as $n\to\infty$. The following lemma is well-known, but, for completeness, its proof is given in Section \ref{sec: proofs}.

\begin{lemma}\label{lemma: DOA stable}
If $X_1,X_2,\dots\iid \mu$ then 
$$
a_n\sum_{i=1}^n X_i \cond PS_\alpha(\alpha).
$$
\end{lemma}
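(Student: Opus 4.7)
My plan is to prove convergence via Laplace transforms, since weak convergence of probability measures on $\mathbb R_+$ is equivalent to pointwise convergence of their Laplace transforms on $[0,\infty)$. The target $PS_\alpha(\alpha)$ has Laplace transform $e^{-\Gamma(1-\alpha)z^\alpha}$, so the goal reduces to showing
$$
n\bigl(1-\rE[e^{-za_nX_1}]\bigr)\to \Gamma(1-\alpha)z^\alpha \quad\text{as } n\to\infty
$$
for each fixed $z\ge0$, because then $\rE[e^{-za_n\sum_{i=1}^n X_i}]=\bigl(\rE[e^{-za_nX_1}]\bigr)^n\to e^{-\Gamma(1-\alpha)z^\alpha}$ by the standard $n(1-x_n)\to c\Rightarrow x_n^n\to e^{-c}$ argument.

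The first step is to rewrite the Laplace transform in tail form. Using $1-e^{-zx}=z\int_0^x e^{-zu}\rd u$ and Fubini,
$$
1-\rE[e^{-za_nX_1}]=z a_n\int_0^\infty e^{-za_nx}\mu\bigl((x,\infty)\bigr)\rd x = z\int_0^\infty e^{-zy}\mu\bigl((y/a_n,\infty)\bigr)\rd y
$$
after the substitution $y=a_nx$. Writing $c_n=1/a_n\to\infty$ and using \eqref{eq: mu}, this becomes
$$
n\bigl(1-\rE[e^{-za_nX_1}]\bigr)= z\int_0^\infty e^{-zy}y^{-\alpha}\,\frac{L(yc_n)}{L(c_n)}\cdot\frac{nL(c_n)}{c_n^\alpha}\rd y.
$$
By the defining property of $V^\leftarrow$ we have $V(c_n)=c_n^\alpha/L(c_n)\sim n$, so the last factor tends to $1$, while $L(yc_n)/L(c_n)\to 1$ pointwise by slow variation of $L$.

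The main technical step is to justify passing the limit inside the integral. I would invoke Potter's bounds: for any $\delta\in(0,1-\alpha)$ there exist $C,n_0$ with $L(yc_n)/L(c_n)\le C\max(y^\delta,y^{-\delta})$ for all $y>0$ and $n\ge n_0$. The dominating function $Cz e^{-zy}y^{-\alpha}\max(y^\delta,y^{-\delta})$ is integrable on $(0,\infty)$ for $z>0$ precisely because $\alpha+\delta<1$ controls the singularity at $0$ while $e^{-zy}$ controls the tail. Dominated convergence then yields
$$
n\bigl(1-\rE[e^{-za_nX_1}]\bigr)\to z\int_0^\infty e^{-zy}y^{-\alpha}\rd y = z\cdot z^{\alpha-1}\Gamma(1-\alpha)=\Gamma(1-\alpha)z^\alpha,
$$
which is the desired limit; the case $z=0$ is trivial. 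I expect the Potter-bound/dominated-convergence step to be the real content, since everything else is standard manipulation of Laplace transforms and the basic calculus of regular variation.
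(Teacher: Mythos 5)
Your argument is correct in substance but takes a genuinely different route from the paper. You work directly with Laplace transforms, reducing the claim to $n\bigl(1-\rE[e^{-za_nX_1}]\bigr)\to\Gamma(1-\alpha)z^\alpha$ and establishing this via the tail-integral representation, regular variation, and Potter's bounds --- essentially the classical Abelian argument for distributions in the domain of attraction of a positive stable law. The paper instead routes everything through a general triangular-array criterion (Proposition \ref{prop: conv of sums of arrays}): it verifies that $nP(a_nX_1>s)\to \alpha s^{-\alpha}$ and that the truncated first moments vanish (Lemma \ref{lemma: conv}, which rests on Feller's ratio theorem for truncated moments), and then invokes the criterion. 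The payoff of the paper's approach is reuse: the same proposition and the same Lemma \ref{lemma: conv} carry the proof of the main result, Theorem \ref{thrm: main for pos}, whereas your computation is self-contained but would have to be redone with the tempering function inserted. One small caveat in your write-up: Potter's bounds control $L(yc_n)/L(c_n)$ only when both arguments exceed some threshold $t_0$, so the domination ``for all $y>0$ and $n\ge n_0$'' is not literally available near $y=0$; you should treat the range $y<t_0/c_n$ separately, for instance by bounding $\mu\bigl((yc_n,\infty)\bigr)\le1$ there, which contributes at most $znt_0a_n\to0$ because $na_n\in RV_{1-1/\alpha}$ and $\alpha<1$. With that patch the dominated-convergence step goes through and the limit $z\int_0^\infty e^{-zy}y^{-\alpha}\rd y=\Gamma(1-\alpha)z^\alpha$ matches the Laplace transform of $PS_\alpha(\alpha)$.
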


We now consider the effect of tempering on this result. Let $q$ be a tempering function and, for $\ell>0$, define
$$
q_\ell(x) = q(x/\ell) \mbox{  and  } \mu_\ell(\rd x) = c_\ell q_\ell(x) \mu(\rd x),
$$
where
$$
c_\ell = \left[\int_{[0,\infty)}q_\ell(x) \mu(\rd x)\right]^{-1}
$$
is a normalizing constant. Note that, as $\ell\to\infty$, we have $c_\ell\to1$ and $\mu_\ell\conw\mu$. Thus, for large $\ell$, $\mu_\ell$ is close to $\mu$ in some central region, but, if $\lim_{x\to\infty}q_\ell(x)=0$, then it has lighter tails. In this sense, we interpret $\mu_\ell$ as a tempered version of $\mu$.\\

\noindent \textbf{Examples.} \textbf{1.} When $q\equiv 1$ there is no tempering and $\mu_\ell=\mu$ for each $\ell>0$. 
\textbf{2.} When $q(x) = e^{-ax}$ for some $a>0$, we have $q_\ell(x) = e^{-ax/\ell}$. Thus, $\mu_\ell$ is an Esscher transform of $\mu$. 
\textbf{3.} When $q(x) = 1_{0\le x<a}$  for some $a>0$, we have $q_\ell(x) = 1_{0\le  x<a\ell}$. Thus, $\mu_\ell$ is $\mu$ truncated at $a\ell$. This means that, if $X\sim\mu$, then $\mu_\ell$ is the conditional distribution of $X$ given the event $[X<a\ell]$.\\

Examples 2 and 3 above lead to different modifications of $\mu$ which, for large values of $\ell$, are similar to $\mu$ in some central portion, but have lighter tails. We now give our main result for convergence to PTS distributions.

\begin{thrm}\label{thrm: main for pos}
Let $\{\ell_n\}$ be a sequence of positive numbers with $\ell_n\to\infty$, let $X_{n1},X_{n2},\dots,X_{nn}\iid \mu_{\ell_n}$ for each $n\in\mathbb N$, and let $D$ be the set of discontinuities of $q$. Assume that Lebesgue measure of $D$ is $0$. If $a_n\ell_n\to c\in(0,\infty)$, then
$$
a_n\sum_{i=1}^n X_{ni}\conp PTS_\alpha(q_c,\alpha),
$$
where $q_c(x) =  q(x/c)$ for $x\ge0$. If $a_n\ell_n\to\infty$, then
\begin{eqnarray}\label{eq: to PS}
a_n\sum_{i=1}^n X_{ni}\conp PS_\alpha(\beta)
\end{eqnarray}
with $\beta=\alpha$. If $a_n\ell_n\to0$ and $\lim_{x\to\infty}q(x)=\zeta<\infty$, then \eqref{eq: to PS} holds with $\beta=\alpha\zeta$.
\end{thrm}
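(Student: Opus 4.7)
The plan is to work entirely with Laplace transforms, since every $X_{ni}\ge 0$. Let $\phi_n(z)=\rE[e^{-z a_n X_{n1}}]$, so that $\rE[\exp(-z a_n\sum_{i=1}^n X_{ni})]=\phi_n(z)^n$ for $z\ge 0$. Because $\phi_n(z)\to 1$, the standard exponentiation lemma reduces the problem to showing
\[
n\bigl(1-\phi_n(z)\bigr)\;\longrightarrow\;\psi(z)\qquad (z\ge 0),
\]
where $\psi$ is the Laplace exponent of the claimed limit; continuity at $z=0$ of the resulting $e^{-\psi}$ then upgrades pointwise convergence of Laplace transforms to convergence of distributions. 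The change of variables $y=a_n x$, together with the definition $\mu_{\ell_n}(\rd x)=c_{\ell_n}q(x/\ell_n)\mu(\rd x)$, rewrites this quantity as
\[
n\bigl(1-\phi_n(z)\bigr)=c_{\ell_n}\int_{(0,\infty)}\bigl(1-e^{-zy}\bigr)\,q\bigl(y/(a_n\ell_n)\bigr)\,n\nu_n(\rd y),
\]
with $\nu_n=\mu\circ a_n^{-1}$.

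Three ingredients drive the analysis. First, by dominated convergence (using that $\mu$ is a probability measure, $q$ is bounded, and $q(0^+)=1$), one gets $c_{\ell_n}\to 1$. Second, the definition of $a_n$ together with the regular-variation hypothesis \eqref{eq: mu} gives the standard tail equivalence $n\nu_n((y,\infty))=n\bar\mu(y/a_n)\to y^{-\alpha}$ for every $y>0$, so $n\nu_n$ converges vaguely on $(0,\infty)$ to the positive-stable L\'evy measure $\alpha y^{-1-\alpha}\,\rd y$. Third, the tempering factor converges pointwise to a function $Q$ determined by the regime: if $a_n\ell_n\to c\in(0,\infty)$ then $q(y/(a_n\ell_n))\to q(y/c)=q_c(y)$ whenever $y/c\notin D$; if $a_n\ell_n\to\infty$ then $q(y/(a_n\ell_n))\to\lim_{x\downarrow 0}q(x)=1$ for every $y>0$; and if $a_n\ell_n\to 0$ with $q(x)\to\zeta$ at infinity, then $q(y/(a_n\ell_n))\to\zeta$ for every $y>0$.

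To identify the limit I would apply a dominated-convergence argument after splitting the integral over $(0,\varepsilon)\cup[\varepsilon,R]\cup(R,\infty)$. The integrand is dominated by $(\sup q)\,(1\wedge zy)$, which is integrable against $\alpha y^{-1-\alpha}\,\rd y$ since $\alpha\in(0,1)$. On the middle range $[\varepsilon,R]$, the vague convergence of $n\nu_n$ combines with the a.e.\ pointwise convergence of the integrand (using in Case~1 that $D$ has Lebesgue measure zero and that the limit measure is absolutely continuous) to produce the interior limit. The right tail is controlled by $\int_R^\infty(\sup q)\,n\nu_n(\rd y)\le(\sup q)\cdot n\bar\mu(R/a_n)\to(\sup q)\,R^{-\alpha}$, vanishing as $R\to\infty$. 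For the left tail, $(1-e^{-zy})\le zy$ together with Fubini yields $\int_0^\varepsilon zy\cdot n\nu_n(\rd y)\le z\int_0^\varepsilon n\bar\mu(u/a_n)\,\rd u$, which by Potter-type bounds tends to $z\varepsilon^{1-\alpha}/(1-\alpha)$ and vanishes as $\varepsilon\downarrow 0$. Assembling these pieces yields $n(1-\phi_n(z))\to\alpha\int_0^\infty(1-e^{-zy})Q(y)y^{-1-\alpha}\,\rd y$, which is the Laplace exponent of $PTS_\alpha(q_c,\alpha)$ in Case~1, of $PS_\alpha(\alpha)$ in Case~2, and of $PS_\alpha(\alpha\zeta)$ in Case~3 (using $\int_0^\infty(1-e^{-zy})y^{-1-\alpha}\,\rd y=\Gamma(1-\alpha)z^\alpha/\alpha$). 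The main obstacle I expect is precisely the uniform control of the small-$y$ contribution together with the handling of the discontinuity set of $q$ in Case~1; the Lebesgue-null assumption on $D$, the absolute continuity of the limit L\'evy measure, and Potter-type bounds on $\bar\mu$ are exactly what is needed to legitimize the dominated-convergence step.
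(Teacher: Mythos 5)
Your proposal is correct in substance and follows essentially the same route as the paper: both arguments reduce the problem to showing $n(1-\phi_n(z))\to\psi(z)$ for the appropriate Laplace exponent $\psi$, which the paper packages as Proposition \ref{prop: conv of sums of arrays} (tail-measure convergence plus a truncated-mean condition) combined with Lemmas \ref{lemma: conv} and \ref{lemma: hn}; your three ingredients ($c_{\ell_n}\to1$, vague convergence of $n\nu_n$ to $\alpha y^{-1-\alpha}\,\rd y$, and convergence of the rescaled tempering factor off $D$) are exactly the ones used there. One step should be stated more carefully: on the middle range $[\varepsilon,R]$, vague convergence of $n\nu_n$ together with mere pointwise a.e.\ convergence of the $n$-dependent integrands $h_n(y)=q(y/(a_n\ell_n))(1-e^{-zy})$ does \emph{not} in general yield $\int h_n\,\rd(n\nu_n)\to\int h\,\rd M_\infty$ (consider $h_n=1_{\{1/n\}}$, $h=0$, and measures $\delta_{1/n}\conw\delta_0$); what is needed, and what does hold here, is the continuous-convergence condition $h_n(y_n)\to h(y)$ whenever $y_n\to y$ with $y/c\notin D$, which follows from the continuity of $q$ off $D$ and $a_n\ell_n\to c$, and which is precisely the hypothesis of the paper's Lemma \ref{lemma: hn} (invoking Pollard's extended continuous mapping result). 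With that repair, your splitting into $(0,\varepsilon)\cup[\varepsilon,R]\cup(R,\infty)$, the Karamata-type estimate for the left tail, and the boundedness of $q$ for the right tail reproduce the content of Lemma \ref{lemma: conv} and the paper's proof of the theorem.
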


\begin{proof}
The proof can be found in Section \ref{sec: proofs}.
\end{proof}

\begin{remark} 
For most applications the parameter $\ell$ is not actually approaching infinity.  Instead, it is some fixed but (very) large constant. Since $a_\bullet\in RV_{-1/\alpha}$, we can write $a_n\ell = [n^{-1}\ell^{\alpha}]^{1/\alpha}L'(n)$ for some $L'\in RV_0$. Now, consider the sum of $n$ iid random variables from $\mu_\ell$, and assume that the tempering function $q$
is such that $\mu_\ell$ has a finite variance. Theorem \ref{thrm: main for pos} can be interpreted as follows. When $n$ is on the order of $\ell^{\alpha}$ the distribution of the sum is close to $PTS_\alpha(q_c,\alpha)$. However, once $n$ is much larger than $\ell^{\alpha}$, the central limit theorem will take effect and the distribution of the sum will be well approximated by the Gaussian.  A constant that determines when such regimes occur was called the ``natural scale'' in \cite{Grabchak:Samorodnitsky:2010}. Thus, in this case, the natural scale is $\ell^\alpha$. Using slightly different perspectives, this was previously found to be the natural scale for Tweedie distributions in \cite{Grabchak:Samorodnitsky:2010} and \cite{Grabchak:Molchanov:2015}.
\end{remark}

The following transfer lemma allows us to transfer convergence results from the case of multiplicative scaling to that of scaling using the thinning operation $\circ$. It is an extension of a remark in \cite{Steutel:vanHarn:1979}.

\begin{lemma}\label{lemma: transfer}
Let $\{X_n\}$ be a sequence of random variables on $\mathbb Z_+$ and assume that $\{\gamma_n\}$ is a deterministic sequence in $[0,1]$ with $\gamma_n\to0$. If $\gamma_n X_n\cond X$ for some random variable $X$, then
$$
\gamma_n\circ X_n \cond N_X,
$$
where $\{N_t:t\ge0\}$ is a Poisson process with rate $1$ and independent of $X$.
\end{lemma}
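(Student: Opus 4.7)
The plan is to work with probability generating functions, since both $\gamma_n\circ X_n$ and $N_X$ take values in $\mathbb{Z}_+$. Writing $P_n(s)=\rE[s^{X_n}]$, the identity recalled just before the definition of the thinning operation gives $\rE[s^{\gamma_n\circ X_n}]=P_n(1-\gamma_n(1-s))$; on the other hand, conditioning on $X$ and using that $N_X\mid X$ is Poisson with mean $X$ gives $\rE[s^{N_X}]=\rE[e^{-X(1-s)}]$. Setting $w=1-s\in[0,1]$, the statement thus reduces to establishing
$$
\rE\bigl[(1-\gamma_n w)^{X_n}\bigr]\longrightarrow \rE[e^{-wX}] \qquad \text{for each } w\in[0,1].
$$

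To exploit the hypothesis $\gamma_n X_n\cond X$, I would rewrite $(1-\gamma_n w)^{X_n}=\exp(-w'_n\,\gamma_n X_n)$, where $w'_n=-\log(1-\gamma_n w)/\gamma_n$ is a deterministic sequence with $w'_n\to w$, because $\gamma_n\to 0$ and $-\log(1-x)/x\to 1$ as $x\downarrow 0$. By Slutsky's theorem, $w'_n\gamma_n X_n\cond wX$, and the continuous mapping theorem applied to the bounded continuous map $u\mapsto e^{-u}$ on $[0,\infty)$, together with bounded convergence (the integrands lie in $[0,1]$), then yields the desired convergence of expectations. The standard continuity theorem for pgfs on $\mathbb{Z}_+$ upgrades this pointwise convergence in $w$ to the distributional convergence $\gamma_n\circ X_n\cond N_X$.

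The one point that calls for care is the deterministic change of argument from $w$ to $w'_n$, which is immediate from a Taylor expansion of $\log(1-x)$; I therefore do not expect a real obstacle. A more hands-on alternative would be to bound $|P_n(1-\gamma_n w)-P_n(e^{-\gamma_n w})|$ directly, using $0\le e^{-\gamma_n w}-(1-\gamma_n w)\le (\gamma_n w)^2/2$ together with monotonicity of $P_n$ on $[0,1]$; this appears to require additional moment-type control on $X_n$, so I would favour the Slutsky-based route, which exploits only the single hypothesis $\gamma_n X_n\cond X$.
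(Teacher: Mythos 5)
Your proposal is correct and follows essentially the same route as the paper: both reduce the claim to pointwise convergence of the pgf $P_n(1-\gamma_n(1-s))$, rewrite the integrand as $\exp\left(-X_n\log(1-\gamma_n(1-s))\right)$ so that Slutsky's theorem applied to $-\gamma_n(1-s)X_n\cdot\frac{\log(1-\gamma_n(1-s))}{\gamma_n(1-s)}$ gives convergence in distribution to $(1-s)X$, and then pass to the limit of expectations via the boundedness of $e^{-u}$ on $[0,\infty)$. Your Slutsky-based route is exactly the paper's argument, so no further comparison is needed.
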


\begin{proof}
The proof can be found in Section \ref{sec: proofs}.
\end{proof}

Combining this with Lemma \ref{lemma: DOA stable} gives the following.

\begin{lemma}\label{lemma: DOA disc stable}
Assume that the support of $\mu$ is contained in $\mathbb Z_+$. If $X_1,X_2,\dots\iid \mu$ then 
$$
a_n\circ\sum_{i=1}^n X_i \cond DS_\alpha(\alpha).
$$
\end{lemma}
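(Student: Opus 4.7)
The plan is to assemble this lemma as a three-line corollary of tools already in place: Lemma \ref{lemma: DOA stable}, the transfer lemma (Lemma \ref{lemma: transfer}), and the Poisson subordination representation of discrete stable distributions recalled in the Proposition of Section 3.

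First I would observe that the thinning operation $a_n \circ (\cdot)$ makes sense here: the assumption that $\mu$ is supported on $\mathbb Z_+$ guarantees $S_n := \sum_{i=1}^n X_i \in \mathbb Z_+$, and since $a_n \in RV_{-1/\alpha}$ with $\alpha \in (0,1)$ we have $a_n \to 0$, so $a_n \in [0,1]$ for all $n$ sufficiently large (the finitely many small-$n$ terms are harmless for a convergence-in-distribution statement). Next, Lemma \ref{lemma: DOA stable} applied to the iid sequence $\{X_i\}$ yields
$$
a_n S_n \cond T, \qquad T \sim PS_\alpha(\alpha),
$$
with no further conditions beyond \eqref{eq: mu} on the tails of $\mu$, which is inherited from the hypotheses.

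Now I would invoke Lemma \ref{lemma: transfer} with the deterministic sequence $\gamma_n = a_n \in [0,1]$ (for large $n$), $\gamma_n \to 0$, and $X_n = S_n$. Its conclusion is exactly
$$
a_n \circ S_n \cond N_T,
$$
where $\{N_t : t \ge 0\}$ is a unit-rate Poisson process independent of $T$. Finally, since $T \sim PS_\alpha(\alpha)$, the Proposition in Section 3 identifies $N_T \sim DS_\alpha(\alpha)$, which is the desired conclusion.

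There is no real obstacle in this argument: the substantive work has already been carried out in Lemmas \ref{lemma: DOA stable} and \ref{lemma: transfer} and in the Steutel--van Harn representation of $DS_\alpha$. The only items to verify are bookkeeping ones — that the thinning is well-defined (support in $\mathbb Z_+$ and $a_n \in [0,1]$ eventually), that the hypotheses of the transfer lemma are met ($a_n \to 0$ is immediate from $a_n \in RV_{-1/\alpha}$), and that the limit random variables match up across the two steps so that the subordination identity delivers the correct parameter.
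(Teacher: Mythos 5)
Your proof is correct and is exactly the route the paper takes: it presents this lemma as an immediate consequence of combining Lemma \ref{lemma: DOA stable} with the transfer Lemma \ref{lemma: transfer} and identifying $N_T$ via the Poisson subordination representation of $DS_\alpha$. Your additional bookkeeping (checking $a_n\in[0,1]$ eventually and that thinning is well-defined) is sound and slightly more careful than the paper, which offers no written proof at all.
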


Now combining Theorem \ref{thrm: main for pos} with Lemma \ref{lemma: transfer} gives our main result for convergence to DTS distributions.

\begin{thrm}\label{thrm: main for discrete}
Assume that the support of $\mu$ is contained in $\mathbb Z_+$. Let $\{\ell_n\}$ be a sequence of positive numbers with $\ell_n\to\infty$, let $X_{n1},X_{n2},\dots,X_{nn}\iid \mu_{\ell_n}$ for each $n\in\mathbb N$, and let $D$ be the set of discontinuities of $q$. Assume that Lebesgue measure of $D$ is $0$. If $a_n\ell_n\to c\in(0,\infty)$, then
$$
a_n\circ\sum_{i=1}^n X_{ni}\conp DTS_\alpha(q_c,\alpha),
$$
where $q_c(x) = q(x/c)$ for $x\ge0$. If $a_n\ell_n\to\infty$, then
\begin{eqnarray}\label{eq: to DS}
a_n\circ\sum_{i=1}^n X_{ni}\conp DS_\alpha(\beta)
\end{eqnarray}
with $\beta=\alpha$. If $a_n\ell_n\to0$ and $\lim_{x\to\infty}q(x)=\zeta<\infty$, then \eqref{eq: to DS} holds with $\beta=\alpha\zeta$.
\end{thrm}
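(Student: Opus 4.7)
The proof is a direct composition of Theorem~\ref{thrm: main for pos} and Lemma~\ref{lemma: transfer}. First I would observe that the sums $S_n = \sum_{i=1}^n X_{ni}$ take values in $\mathbb Z_+$: each tempered measure $\mu_{\ell_n}$ is $\mu$ reweighted by the bounded density $c_{\ell_n} q_{\ell_n}$, so it inherits the $\mathbb Z_+$-support of $\mu$, and a finite sum of $\mathbb Z_+$-valued variables is again in $\mathbb Z_+$. Moreover, since $a_\bullet \in RV_{-1/\alpha}$ with $\alpha\in(0,1)$, we have $a_n\to 0$, so for all sufficiently large $n$ we have $a_n\in[0,1]$ and the thinning $a_n\circ S_n$ is well defined.

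For the main case $a_n\ell_n\to c\in(0,\infty)$, Theorem~\ref{thrm: main for pos} gives $a_n S_n \cond T$ with $T\sim PTS_\alpha(q_c,\alpha)$. Applying Lemma~\ref{lemma: transfer} with $\gamma_n = a_n$ then yields
$$
a_n\circ S_n \cond N_T,
$$
where $\{N_t:t\ge0\}$ is a rate-$1$ Poisson process independent of $T$. By Definition~\ref{defn: DTS}, $N_T \sim DTS_\alpha(q_c,\alpha)$, which is the desired conclusion.

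The remaining two cases are handled identically. When $a_n\ell_n\to\infty$, Theorem~\ref{thrm: main for pos} gives $a_n S_n \cond T$ with $T\sim PS_\alpha(\alpha)$, and Lemma~\ref{lemma: transfer} together with the representation proposition preceding Definition~\ref{defn: DTS} (which says $N_T \sim DS_\alpha(\alpha)$ for such $T$) delivers \eqref{eq: to DS} with $\beta = \alpha$. When $a_n\ell_n\to 0$ and $\lim_{x\to\infty} q(x) = \zeta<\infty$, the same argument applies with $T\sim PS_\alpha(\alpha\zeta)$.

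No serious obstacle is expected, as the hard analytic work is already absorbed into Theorem~\ref{thrm: main for pos} and Lemma~\ref{lemma: transfer}. The only minor point to verify is that the conclusions of Theorem~\ref{thrm: main for pos}, though written with $\conp$, are in particular convergence in distribution, so that the hypothesis of Lemma~\ref{lemma: transfer} is met. After that, the argument is purely mechanical and the compatibility between the multiplicative and thinning-based limits is precisely what Definition~\ref{defn: DTS} is designed to capture.
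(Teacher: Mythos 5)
Your proposal is correct and matches the paper's own argument exactly: the paper derives this theorem by combining Theorem~\ref{thrm: main for pos} with Lemma~\ref{lemma: transfer}, identifying the limit $N_T$ via Definition~\ref{defn: DTS} (respectively the Poisson-subordination representation of $DS_\alpha$), which is precisely what you do. The supporting observations you add ($\mathbb Z_+$-valued sums, $a_n\in[0,1]$ eventually, and that the stated convergence is in distribution so Lemma~\ref{lemma: transfer} applies) are sensible and unobjectionable.
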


\section{Proofs}\label{sec: proofs}

The proofs of Lemma \ref{lemma: DOA stable} and Theorem \ref{thrm: main for pos} are based on verifying conditions for the convergence of sums of triangular array.  The general theory can be found in, e.g.\ \cite{Meerschaert:Scheffler:2001} or \cite{Kallenberg:2002}. However, for the situations considered here, the conditions can be simplified. These are as follows.

\begin{prop}\label{prop: conv of sums of arrays}
Let $k_n$ be a sequence of positive integers with $k_n\to\infty$, let $M$ be  a Borel measure on $(0,\infty)$ satisfying \eqref{eq: finite var}, and let $\{X_{nm}:n=1,2,\dots,m=1,2,\dots,k_n\}$ be nonnegative random variables such that, for every $n$, the random variables $X_{n1},X_{n2},\dots,X_{nk_n}$ are iid and $X_{n1}\conp 0$ as $n\to\infty$.  If, for every $s>0$ with $M(\{s\})=0$, we have
\begin{eqnarray}\label{eq: conv to Levy meas}
\lim_{n\to\infty}k_nP\left(X_{n1}>s\right) = M((s,\infty))
\end{eqnarray}
and
\begin{eqnarray}\label{eq: conv of drift to zero}
\lim_{\epsilon\downarrow0}\limsup_{n\to\infty} k_n\rE\left[X_{n1}1_{[X_{n1}<\epsilon]}\right]=0
\end{eqnarray}
then
$$
\sum_{m=1}^{k_n} X_{nm} \cond ID_+(M,0).
$$
\end{prop}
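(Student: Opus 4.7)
The plan is to prove the proposition via Laplace transforms. Since the $X_{nm}$ are nonnegative, the continuity theorem for Laplace transforms reduces the conclusion to showing, for every fixed $z > 0$, that
$$\bigl(\rE e^{-zX_{n1}}\bigr)^{k_n} \longrightarrow \exp\left\{-\int_{(0,\infty)} \bigl(1-e^{-zx}\bigr) M(\rd x)\right\}.$$
The hypothesis $X_{n1} \conp 0$ together with dominated convergence yields $\rE e^{-zX_{n1}} \to 1$, so the expansion $\log(1 - u) = -u + O(u^2)$ near $u = 0$ lets me replace $k_n \log \rE e^{-zX_{n1}}$ by $-k_n(1 - \rE e^{-zX_{n1}})$ with negligible error, provided the latter quantity stays bounded. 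The task thus reduces to showing
$$k_n \rE\bigl[1 - e^{-zX_{n1}}\bigr] \longrightarrow \int_{(0,\infty)}\bigl(1 - e^{-zx}\bigr) M(\rd x).$$

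To establish this limit, I would split the expectation at a level $\epsilon > 0$ chosen so that $M(\{\epsilon\}) = 0$; this is possible because $M$ has at most countably many atoms. For the contribution from the event $[X_{n1} \geq \epsilon]$, hypothesis \eqref{eq: conv to Levy meas} (applied at $\epsilon$ itself and at $s > \epsilon$ with $M(\{s\}) = 0$) yields weak convergence of the finite measures $k_n P(X_{n1} \in \cdot)|_{[\epsilon,\infty)}$ to $M|_{[\epsilon,\infty)}$. Since $x \mapsto 1 - e^{-zx}$ is bounded and continuous on $[\epsilon,\infty)$, integrating gives
$$k_n \rE\bigl[(1 - e^{-zX_{n1}}) 1_{[X_{n1} \geq \epsilon]}\bigr] \longrightarrow \int_{[\epsilon,\infty)}\bigl(1 - e^{-zx}\bigr) M(\rd x).$$
For the contribution from $[X_{n1} < \epsilon]$, the inequality $1 - e^{-zx} \leq zx$ combined with hypothesis \eqref{eq: conv of drift to zero} gives
$$\lim_{\epsilon\downarrow 0}\limsup_{n\to\infty} k_n \rE\bigl[(1 - e^{-zX_{n1}}) 1_{[X_{n1} < \epsilon]}\bigr] \leq z \lim_{\epsilon\downarrow 0}\limsup_{n\to\infty} k_n \rE[X_{n1} 1_{[X_{n1} < \epsilon]}] = 0.$$

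Finally, $\int_{(0,\epsilon)}(1 - e^{-zx}) M(\rd x) \to 0$ as $\epsilon \downarrow 0$ by dominated convergence, using the bound $1 - e^{-zx} \leq (z+1)(x \wedge 1)$ together with the integrability condition \eqref{eq: finite var}. Adding the two contributions, sending $n \to \infty$ first and then $\epsilon \downarrow 0$, yields the displayed limit and hence the convergence of Laplace transforms. The main technical obstacle I anticipate is the careful handling of the iterated limits, together with verifying that hypothesis \eqref{eq: conv to Levy meas}, which is phrased as pointwise convergence of tails, genuinely upgrades to weak convergence of the finite measures on $[\epsilon, \infty)$; this requires the standard trick of taking differences of tails at continuity points of $M$ to get convergence on intervals, and then invoking the portmanteau theorem. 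The log-linear approximation requires boundedness of $k_n(1 - \rE e^{-zX_{n1}})$, which follows a posteriori from its convergence to a finite limit, so the argument closes cleanly.
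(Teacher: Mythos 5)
Your proposal is correct and follows essentially the same route as the paper: convergence of Laplace transforms, replacement of $k_n\log\hat\nu_n$ by $k_n(\hat\nu_n-1)$, a split of $k_n\rE[1-e^{-zX_{n1}}]$ at a level $\epsilon$, the Portmanteau theorem on $[\epsilon,\infty)$, the bound $1-e^{-zx}\le zx$ together with \eqref{eq: conv of drift to zero} below $\epsilon$, and an iterated-limit argument to send $\epsilon\downarrow0$. The only cosmetic difference is that the paper invokes a Portmanteau theorem for vague convergence of unbounded measures directly, where you normalize to finite measures on $[\epsilon,\infty)$ first; both are standard and equivalent here.
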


\begin{proof}
Let $\nu_n$ be the distribution of $X_{n1}$, let $\nu= ID_+(M,0)$, and let $\hat\nu_n(u)=\int_{[0,\infty)} e^{-ux} \nu_n(\rd x)$ and $\hat\nu(u)=\int_{[0,\infty)} e^{-ux} \nu(\rd x)$ be the Laplace transforms of $\nu_n$ and $\nu$ respectively. The Laplace transform of the distribution of $\sum_{m=1}^{k_n} X_{nm}$ is $\left[\hat\nu_n(u)\right]^{k_n}$. We must show that 
$$
\lim_{n\to\infty}\left[\hat\nu_n(u)\right]^{k_n}= \hat\nu(u), \ \ \ u\ge0.
$$
We will write the left side in a simpler form. Specifically, we have
\begin{eqnarray*}
\lim_{n\to\infty}\left[\hat\nu_n(u)\right]^{k_n} &=& \lim_{n\to\infty}\exp\left\{k_n\log\left[\hat\nu_n(u)\right]\right\}\\
&=& \lim_{n\to\infty}\exp\left\{k_n\left[\hat\nu_n(u)-1\right]\right\}\\
&=&\lim_{n\to\infty}\exp\left(-k_n\int_{[0,\infty)}\left(1-e^{-ux}\right)\nu_n(\rd x)\right),
\end{eqnarray*}
where the second equality follows from the facts that $\log(x)\sim(x-1)$ as $x\to1$ and $\lim_{n\to\infty}\hat\nu_n(u)=1$ for each $u\ge0$ since $X_{n1}\conp 0$ as $n\to\infty$. 

Since, for fixed $u$, $f_u(x) = \left(1-e^{-ux}\right)$ is a bounded and continuous function of $x$, by the Portmanteau Theorem for vague convergence (see Theorem 1 in \cite{Barczy:Pap:2006}) \eqref{eq: conv to Levy meas} implies that, for any $\epsilon>0$,
$$
\lim_{n\to\infty}k_n\int_{[\epsilon,\infty)}\left(1-e^{-ux}\right)\nu_n(\rd x) = \int_{[\epsilon,\infty)}\left(1-e^{-ux}\right)M(\rd x).
$$
By \eqref{eq: conv of drift to zero} and well-known facts about the exponential function, we have
\begin{eqnarray*}
0&\le&\lim_{\epsilon\downarrow0}\liminf_{n\to\infty}k_n\int_{[0,\epsilon)}\left(1-e^{-ux}\right)\nu_n(\rd x)\\
&\le& \lim_{\epsilon\downarrow0}\limsup_{n\to\infty}k_n\int_{[0,\epsilon)}\left(1-e^{-ux}\right)\nu_n(\rd x)\\
&\le& \lim_{\epsilon\downarrow0}\limsup_{n\to\infty}uk_n\int_{[0,\epsilon)}x\nu_n(\rd x)=0.
\end{eqnarray*}
Combining the above with Lebesgue's dominated convergence theorem gives
\begin{eqnarray*}
&&\liminf_{n\to\infty}k_n\int_{[0,\infty)}\left(1-e^{-ux}\right)\nu_n(\rd x)\\
&&\qquad= \lim_{\epsilon\downarrow0}\liminf_{n\to\infty}k_n\int_{[0,\epsilon)}\left(1-e^{-ux}\right)\nu_n(\rd x) \\
&&\qquad\qquad
+ \lim_{\epsilon\downarrow0}\liminf_{n\to\infty}k_n\int_{[\epsilon,\infty)}\left(1-e^{-ux}\right)\nu_n(\rd x) \\
&&\qquad = \lim_{\epsilon\downarrow0} \int_{[\epsilon,\infty)}\left(1-e^{-ux}\right)M(\rd x)\\
&&\qquad =  \int_{(0,\infty)}\left(1-e^{-ux}\right)M(\rd x).
\end{eqnarray*}
Similarly, we can repeat the above with $\limsup$ in place of $\liminf$. Then, putting everything together gives
\begin{eqnarray*}
\lim_{n\to\infty}\left[\hat\nu_n(u)\right]^{k_n}
&=&\lim_{n\to\infty}\exp\left(-k_n\int_{[0,\infty)}\left(1-e^{-ux}\right)\nu_n(\rd x)\right)\\
&=&\exp\left(-\int_{(0,\infty)}\left(1-e^{-ux}\right)M(\rd x)\right),
\end{eqnarray*}
which is the Laplace transform of $ID_+(M,0)$ as required.
\end{proof}

Before proceeding, we define the Borel measures
\begin{eqnarray}\label{eq: Mn}
M_n(A) = n\int_{[0,\infty)} 1_A(a_n x)\mu(\rd x), \qquad A\in\mathfrak B(\mathbb R_+)
\end{eqnarray}
and 
\begin{eqnarray}\label{eq: Minf}
M_\infty(A) =  \int_{[0,\infty)} 1_A(x) \alpha x^{-\alpha-1}\rd x, \qquad A\in\mathfrak B(\mathbb R_+).
\end{eqnarray}
Note that $M_\infty$ is the L\'evy measure of the distribution $PS_\alpha(\alpha)$.

\begin{lemma}\label{lemma: conv}
The following hold
\begin{eqnarray*}
\lim_{n\to\infty} M_n((s,\infty)) = M_\infty((s,\infty)), \ \ s>0
 \end{eqnarray*}
and
\begin{eqnarray*}
\lim_{\epsilon\downarrow0}\limsup_{n\to\infty}\int_{[0,\epsilon)} xM_n(\rd x) = 0.
 \end{eqnarray*}
\end{lemma}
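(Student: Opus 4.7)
The plan is to rewrite both quantities directly in terms of the tail function $\bar F(t)=\mu((t,\infty))=t^{-\alpha}L(t)$ and then exploit the defining relation between $a_n$ and $L$, together with one classical Karamata-type estimate.

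For the first assertion, I observe that by a change of variables
$$
M_n((s,\infty)) \;=\; n\,\mu\bigl(\{x:a_n x>s\}\bigr) \;=\; n\,\bar F(s/a_n) \;=\; s^{-\alpha}\bigl[n\,a_n^{\alpha}\,L(s/a_n)\bigr].
$$
Next, since $a_n=1/V^{\leftarrow}(n)$ and $V(V^{\leftarrow}(n))\sim n$, we have $(1/a_n)^{\alpha}/L(1/a_n)\sim n$, equivalently $n\,a_n^{\alpha}L(1/a_n)\to 1$. Combining this with the slow variation $L(s/a_n)/L(1/a_n)\to 1$ (here I use $L\in RV_0$ and $1/a_n\to\infty$) gives $M_n((s,\infty))\to s^{-\alpha}$, which agrees with $M_\infty((s,\infty))=\int_s^\infty\alpha x^{-\alpha-1}\,\mathrm dx$.

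For the second assertion, the same change of variables yields
$$
\int_{[0,\epsilon)}x\,M_n(\mathrm dx)\;=\;n\,a_n\int_{[0,\epsilon/a_n)}y\,\mu(\mathrm dy).
$$
Because $\bar F\in RV_{-\alpha}$ with $\alpha\in(0,1)$, Karamata's theorem on truncated moments (see \cite{Bingham:Goldie:Teugels:1987}) gives $\int_{[0,T)}y\,\mu(\mathrm dy)\sim\frac{\alpha}{1-\alpha}\,T\,\bar F(T)=\frac{\alpha}{1-\alpha}\,T^{1-\alpha}L(T)$ as $T\to\infty$. Applying this with $T=\epsilon/a_n$ and simplifying, the previous display is asymptotic to $\frac{\alpha}{1-\alpha}\,\epsilon^{1-\alpha}\bigl[n\,a_n^{\alpha}L(\epsilon/a_n)\bigr]$, and the bracketed factor tends to $1$ by the same argument used in paragraph two. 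Hence $\limsup_{n\to\infty}\int_{[0,\epsilon)}x\,M_n(\mathrm dx)=\frac{\alpha}{1-\alpha}\,\epsilon^{1-\alpha}$, and letting $\epsilon\downarrow 0$ finishes the proof since $1-\alpha>0$.

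Neither step involves a serious obstacle; the only subtlety is keeping track of the interplay between $a_n$ and the slowly varying $L$. The nontrivial ingredient is the Karamata truncated-moment asymptotic, which is valid precisely because $\alpha\in(0,1)$, matching the standing hypothesis of the paper.
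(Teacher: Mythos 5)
Your proposal is correct and follows essentially the same route as the paper: both parts rest on the change of variables $M_n(\rd x)=n\,\mu(\rd(x/a_n))$, the normalization $V(V^\leftarrow(n))\sim n$ (your $n\,a_n^\alpha L(1/a_n)\to1$), slow variation of $L$, and the Karamata truncated-moment asymptotic $\int_{[0,T)}y\,\mu(\rd y)\sim\frac{\alpha}{1-\alpha}T\bar F(T)$, which is exactly the Feller result the paper invokes. The only difference is cosmetic: you phrase the computation in terms of $L$ where the paper phrases it in terms of $V=1/\bar F$.
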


\begin{proof}
We have, for $s>0$,
\begin{eqnarray*}
\lim_{n\to\infty} M_n((s,\infty))&=&\lim_{n\to\infty} V\left(V^\leftarrow(n)\right)\mu((s/a_n,\infty))\\
&=&\lim_{n\to\infty} V(1/a_n)\frac{1}{V(s/a_n)} = s^{-\alpha} = M_\infty((s,\infty))
 \end{eqnarray*}
and, recalling that $\alpha\in(0,1)$ gives
\begin{eqnarray*}
 \lim_{\epsilon\downarrow0}\limsup_{n\to\infty}\int_{[0,\epsilon)} xM_n(\rd x)
&=&  \lim_{\epsilon\downarrow0}\limsup_{n\to\infty} na_n\int_{[0,\epsilon/a_n)} x\mu(\rd x)  \\
&=&\lim_{\epsilon\downarrow0}\limsup_{n\to\infty} V(1/a_n)a_n\int_{[0,\epsilon/a_n)} x\mu(\rd x)\\
 &=& \lim_{\epsilon\downarrow0}\limsup_{n\to\infty} \epsilon^{-\alpha}V(\epsilon/a_n)a_n\int_{[0,\epsilon/a_n)} x\mu(\rd x)\\
 &=&\lim_{\epsilon\downarrow0}\epsilon^{1-\alpha}\limsup_{n\to\infty}\frac{\int_{[0,\epsilon/a_n)} x\mu(\rd x)}{(\epsilon/a_n)\int_{(\epsilon/a_n,\infty)}\mu(\rd x)}\\
 &=&\lim_{\epsilon\downarrow0}\epsilon^{1-\alpha}\frac{\alpha}{1-\alpha}=0,
\end{eqnarray*}
where the first convergence follows by Theorem 2 on page 283 of \cite{Feller:1971}. 
\end{proof}

\begin{proof}[Proof of Lemma \ref{lemma: DOA stable}.]
Since $a_n\to0$, Slutsky's Theorem implies that $a_nX_1\conp0$. From here, the result follows by combining Lemma \ref{lemma: conv} with Proposition \ref{prop: conv of sums of arrays}.
\end{proof}

\begin{lemma}\label{lemma: hn}
Let $h,h_1,h_2,\dots$ be is a sequence of Borel functions and let $D$ be a Borel set with Lebesgue measure zero such that for any $x\in D^c$ and any sequence of real numbers $x_1,x_2,\dots$ with $x_n\to x$ we have $h_n(x_n)\to h(x)$. Then, for any $s>0$,
$$
\lim_{n\to\infty}\int_{(s,\infty)} h_n(x) M_n(\rd x) = \int_{(s,\infty)} h(x) M_\infty(\rd x).
$$
\end{lemma}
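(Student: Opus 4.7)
The plan is to combine the vague convergence $M_n\to M_\infty$ established in Lemma \ref{lemma: conv} with the ``continuous convergence'' hypothesis on $(h_n)$ via a Skorohod representation of the restricted measures. The natural first step is to restrict to $(s,\infty)$, where the measures are finite, with total mass $M_n((s,\infty))\to s^{-\alpha}$ by Lemma \ref{lemma: conv}.

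To upgrade this to weak convergence of the restricted measures, observe that because $M_\infty$ is absolutely continuous on $(0,\infty)$, the convergence $M_n((t,\infty))\to M_\infty((t,\infty))$ from Lemma \ref{lemma: conv} holds for every $t\ge s$. Together with convergence of the total mass and the tightness coming from $M_\infty((t,\infty))=t^{-\alpha}\to 0$, the Portmanteau theorem implies that the normalized probability measures $M_n(\,\cdot\,\cap(s,\infty))/M_n((s,\infty))$ converge weakly to $M_\infty(\,\cdot\,\cap(s,\infty))/s^{-\alpha}$.

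I would then invoke Skorohod's representation theorem to build random variables $Y_n$ and $Y$ on a common probability space with these normalized distributions such that $Y_n\to Y$ almost surely. Since $M_\infty$ is absolutely continuous with respect to Lebesgue measure and $D$ has Lebesgue measure zero, $Y\in D^c$ almost surely, so the hypothesis of the lemma yields $h_n(Y_n)\to h(Y)$ almost surely. Rewriting $\int_{(s,\infty)} h_n\,\rd M_n = M_n((s,\infty))\cdot \rE[h_n(Y_n)]$ then reduces the claim to $\rE[h_n(Y_n)]\to \rE[h(Y)]$.

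The main obstacle is justifying this last convergence of expectations, which requires a form of uniform integrability. In the intended application within Theorem \ref{thrm: main for pos}, the relevant $h_n$ are uniformly bounded — typically of the form $(1-e^{-ux})q_{\ell_n}(x)$ with $q$ bounded — so bounded convergence applies directly. For the general statement, I would split the integration at a large level $T$, using that continuous convergence forces $\{h_n\}$ to be locally uniformly bounded on compact subsets of $D^c$ (otherwise one could extract $x_n\to x\in D^c$ with $|h_n(x_n)|\to\infty$, contradicting the hypothesis), while controlling the tail via $M_n((T,\infty))\to T^{-\alpha}\to 0$ as $T\to\infty$.
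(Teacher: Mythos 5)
Your argument is correct and follows essentially the same route as the paper: the paper likewise normalizes $M_n$ and $M_\infty$ on $(s,\infty)$ to probability measures, deduces their weak convergence from Lemma \ref{lemma: conv} via the Portmanteau theorem, uses that $M_\infty(D)=0$ by absolute continuity, and then invokes a standard extended continuous mapping result (Example 32 on p.~58 of Pollard, 1984) whose proof is precisely your Skorohod-representation argument. The uniform boundedness/integrability issue you flag is genuine but is absorbed into that citation (and is harmless in the application, where $h_n(x)=q(x/(a_n\ell_n))$ is uniformly bounded by a bound on $q$).
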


\begin{proof}
Fix $s>0$. Let $m_n=M_n((s,\infty))$, $m_\infty=M_\infty((s,\infty))$, and define the probability measures $M_n^{(s)}(\rd x) = m_n^{-1}1_{x>s} M_n(\rd x)$ and $M_\infty^{(s)}(\rd x) = m_\infty^{-1} 1_{x>s} M_\infty(\rd x)$. From Lemma \ref{lemma: conv} and the Portmanteau Theorem it follows that $M_n^{(s)}\conw M_\infty^{(s)}$. Further, since $M_\infty^{(s)}$ is absolutely continuous with respect to Lebesgue measure, it follows that $M_\infty^{(s)}(D)=0$. From here, a standard result about weak convergence, see e.g.\ Example 32 on page 58 in \cite{Pollard:1984}, implies that
$$
\lim_{n\to\infty}\int_{(s,\infty)} h_n(x) M^{(s)}_n(\rd x) = \int_{(s,\infty)} h(x) M^{(s)}_\infty(\rd x).
$$
The result follows by combining this with the fact that $m_n\to m_\infty$.
\end{proof}

\begin{proof}[Proof of Theorem \ref{thrm: main for pos}]
The proof is based on verifying that the assumptions of Proposition \ref{prop: conv of sums of arrays} hold. Toward this end, note that
\begin{eqnarray*}
nP(a_nX_{1n}>s) &=& nc_{\ell_n} \int_{(s/a_n,\infty)} q_{\ell_n}(x) \mu(\rd x)\\
&\sim& n \int_{(s/a_n,\infty)} q(x/\ell_n) \mu(\rd x) \\
&=& \int_{(s,\infty)} q(x/(a_n\ell_n)) M_n(\rd x).
\end{eqnarray*}
By Lemma \ref{lemma: hn} this converges to
$$
\int_{(s,\infty)} q(x/c) M_\infty(\rd x) = \alpha\int_{(s,\infty)} q_c(x) x^{-1-\alpha}\rd x,
$$
where we interpret $q(x/c)=1$ if $c=\infty$ and $q(x/c)=\zeta$ if $c=0$. Further,
\begin{eqnarray*}
&&\lim_{\epsilon\downarrow0}\limsup_{n\to\infty} n\rE\left[a_nX_{1}1_{[a_nX_{1}<\epsilon]}\right]\\
&&\qquad=\lim_{\epsilon\downarrow0}\limsup_{n\to\infty} na_n c_{\ell_n}\int_{[0,\epsilon/a_n)} x q(x/\ell_n)\mu(\rd x)\\
&&\qquad= \lim_{\epsilon\downarrow0}\limsup_{n\to\infty} na_n \int_{[0,\epsilon/a_n)} x q(x/\ell_n)\mu(\rd x)\\
&&\qquad\le K\lim_{\epsilon\downarrow0}\limsup_{n\to\infty} \int_{[0,\epsilon)} x M_n(\rd x)=0,
\end{eqnarray*}
where the last line follows by Lemma \ref{lemma: conv} and $K$ is any upper bound on $q$.
\end{proof}

\begin{proof}[Proof of Lemma \ref{lemma: transfer}]
First note that, by Slutsky's Theorem, for any $t>0$
$$
-X_n \log(1-\gamma_n t) = -\gamma_n t X_n \frac{\log(1-\gamma_n t)}{\gamma_n t}\cond X t.
$$
Let $P_n$ be the pgf of the distribution of $X_n$. The pgf of the distribution of $\gamma_n\circ X_n$ is then $P_n(1-\gamma_n+\gamma_n s)=P_n(1-\gamma_n t)$, where $t=1-s$. Since convergence in distribution implies convergence of Laplace transforms,
\begin{eqnarray*}
\lim_{n\to\infty}P_n(1-\gamma_n t) = \lim_{n\to\infty}\rE\left[e^{X_n\log(1-\gamma_n t)}\right] = \rE\left[e^{-X t}\right] =  \rE\left[e^{-X (1-s)}\right].
\end{eqnarray*}
Observing that
$$
\rE[s^{N_X}] = \rE\left[\rE[s^{N_X}|X]\right] = \rE[e^{-X(1-s)}]
$$
gives the result.
\end{proof}

\end{document}